\theoremstyle{plain}
\newtheorem{thm}{Theorem}[section]
\newtheorem{prop}{Proposition}[section]
\theoremstyle{definition}
\newtheorem{defn}{Definition}[section]
\newtheorem{ex}{Example}[section]
\newtheorem{proc}{Procedure}[section]
\theoremstyle{remark}
\newtheorem{rem}{Remark}[section]
\title{Adjusting for selection bias in testing multiple families of hypotheses}
\author{\textsc{Yoav Benjamini}\\\textit{Department of Statistics and Operations Research, The Sackler School of Exact Sciences, Tel Aviv University, Tel Aviv, Israel} \\ybenja@post.tau.ac.il\and \textsc{Marina Bogomolov}\\\emph{Department of Statistics and Operations Research, The Sackler School of Exact Sciences, Tel Aviv University, Tel Aviv, Israel}\\marinazh@post.tau.ac.il}
\begin{document}
\maketitle 
\abstract{In many large multiple testing problems the hypotheses are
divided into families. Given the data, families with evidence for
true discoveries are selected, and hypotheses within them are
tested. Neither controlling the error-rate in each family separately
nor controlling the error-rate over all hypotheses together can
assure that an error-rate is controlled in the selected families. We
formulate this concern about selective inference in its generality,
for a very wide class of error-rates and for any selection
criterion, and present an adjustment of the testing level inside the
selected families that retains the average error-rate over the
selected families.}
\\\\\emph{Keywords}: false discovery rate, family-wise error rate,
hierarchical testing, multiple testing, selective inference.
\newpage
\section{Introduction}
In modern statistical challenges one is often presented with a
(possibly) large set of large families of hypotheses. In fMRI
research interest lies with the locations (voxels) of activation
while a subject is involved in a certain cognitive task. The brain
is divided into regions (either anatomic or functional), and the
hypotheses regarding the locations in each region define a family
(see, e.g., Benjamini and Heller, 2007, Pacifico et al., 2004).
 Searching for differentially expressed genes,
 the genes are often divided into gene sets, defined by prior biological
 knowledge. Each gene set defines a family of hypotheses (see Subramanian et al., 2005,
 Heller et al., 2009).
In the above examples, the families are clusters of units of
interest: voxels or genes. Another problem having similar structure
can be identified in multi-factor analysis of variance (ANOVA),
where for each factor interest lies with the family of pairwise
comparisons between the levels of that factor. Sometimes the set of
hypotheses has a complex structure and can be divided into families
in different ways. An example of such research is the voxelwise
genome-wide association study, see Stein et al. (2010). In this
study the relation between 448,293 Single Nucleotyde Polymorphisms
(SNPs) and volume change in 31,622 voxels (total of
$448,293\cdot31,622$ hypotheses) is explored across 740 elderly
subjects. We may view this problem as a family for each gene,
 or a family for each voxel. This example is considered in detail in Section \ref{realdataex}.


 Since in many of these cases the families are large, and we are in
search for the few interesting significant findings in each family,
it is essential to control for multiplicity within each family.
Efron (2008) showed that controlling for multiplicity globally on
the pooled set of hypotheses (ignoring the division of this set into
families) may distort the inferences inside the families in both
directions - the true discoveries may remain undiscovered in
families rich in potential discoveries and too many false ones may
be discovered in families rich in true null hypotheses.

Many error criteria are in use to deal with the possible inflation
of false discoveries when facing multiplicity. In this work we
address all error-rates that can be written as
 $E(\mathcal{C})$ for some (random) measure of the errors performed
$\mathcal{C}$. These include the \textit {per-family error rate}
(PFER), where $\mathcal{C}=V$ the number of type I errors made; the
\textit{family-wise error rate} (FWER), where $\mathcal{C} =
\textbf{I} _{\{V \geq 1\}}$; the \textit{false discovery rate} (FDR)
where $\mathcal{C} =FDP$ the proportion of false discoveries
(introduced in Benjamini and Hochberg, 1995); the false-discovery
exceedance, FDX, i.e.
$\textmd{Pr}(FDP>\gamma)=E(\textbf{I}_{\{FDP>\gamma\}})$ for some
pre-specified $\gamma$ (see van der Laan et al. (2004a) and Genovese
and Wasserman, 2006); and the generalized error rates, $k$-FWER,
i.e. $\textmd{Pr}(V\geq k)=E(\textbf{I}_{\{V\geq k\}})$ (see van der
Laan et al., 2004, Lehmann and Romano, 2005) and $k$-FDR$=E(FDP\cdot
\textbf{I}_{\{V\geq k\}})$ (introduced in Sarkar, 2007).  However,
there are error-rates which cannot be written in the form
$E(\mathcal{C})$ for some random variable $\mathcal{C}$. For
example, the Bayesian FDR (Fdr), proposed by Efron and Tibshirani
(2002) is $\frac{E(V)}{E(R)}$, and is not of the form
$E(\mathcal{C})$. The positive false discovery rate proposed by
Storey (2003) is
$E(\frac{V}{R}|R>0)=\frac{E(FDP)}{\textmd{Pr}(R>0)}$ and also cannot
be written as $E(\mathcal{C})$. See Farcomeni (2008) for a good
review of multiple error criteria, the relationship between them and
different multiple testing procedures.

%
Suppose we control in each family of hypotheses separately a
criterion at level $q$. Let $\mathcal{C}_i$ be the random value of
$\mathcal{C}$ measuring the errors performed in family $i$,
$i=1,\ldots,m$, so $E(\mathcal{C}_i)\leq q$.
It is trivial that we
also control for $E(\mathcal{C})$ on the average over \textit{all}
families as well, i.e.
\begin{align}\label{av}
E\left(\frac{\sum_{i=1}^m\mathcal{C}_i}{m}\right)\leq q.
\end{align}

In many cases investigators tend to select promising families first,
based on the data at hand, and then look for significant findings
only within the selected families. In these cases it is common to
control for $E(\mathcal{C})$ in each selected family separately. Let
us consider the widely used ANOVA with two or more factors as an
example. The researcher first selects the significant factors, and
then performs post-hoc tests (pairwise comparisons) within each
selected factor. Usually the FWER is controlled within each selected
family of pairwise comparisons using Tukey's procedure, but in large
problems the FDR has also been suggested for that purpose (see
Williams et al., 1999).

When considering only the selected families, we might wish to
control the expected value of $\mathcal{C}_i$ in each selected
family. Unfortunately, the goal of such conditional control for any
combination of selection rule and testing procedure and for any
configuration of true null hypotheses is impossible to achieve, as
shown in the following example.
\begin{ex}
Suppose one of the families, say family $i$, consists only of true
null hypotheses. Let $E(\mathcal{C}^*)$ be an error-rate which
reduces to FWER when all the null hypotheses are true (e.g. FWER,
FDR, FDX). Let Proc* be an $E(\mathcal{C}^*)$-controlling procedure
applied in each selected family at some level $q'$. Consider the
following selection rule $\mathcal{S}$: select the families where at
least one rejection is made by Proc*. It is obvious that if family
$i$ is selected, there is at least one rejection in it, therefore
$\mathcal{C}_i=\textbf{I}_{\{V_i>0\}}=1$. Hence, we have shown that
for any $E(\mathcal{C})$-controlling procedure used for testing the
selected families, we can find a selection rule $\mathcal{S}$ such
that $E(\mathcal{C}
_i^*\,|\,\,i\,\,is\,\,selected\,\,by\,\,\mathcal{S})=1$.
\end{ex}

 Therefore, we would like to achieve
a more modest goal: the control of the expected average value of
$\mathcal{C}$ over the \textit{selected} families, where the average
is 0 if no family is selected.

Formally, let ${\textbf{P}_i}$ be the set of p-values belonging to
family $i$, $i=1,\ldots, m$. Let $\textbf{P}$ be the ensemble of
these sets: $\textbf{P}=\{\textbf{P}_i\}_{i=1}^m$. Let $\mathcal{S}$
be a selection procedure using as input the p-values $\mathbf{P}$,
identifying the indices of the selected families. Note that if the
selection does not depend on $\mathbf{P}$, the situation is similar
to (1), so assuming $\mathcal{S}=\mathcal{S}(\textbf{P})$ is not a
restriction. Define $|\mathcal{S}(\textbf{P})|$, the number of
selected families. The error criterion that interests us is
\begin{align}
E(\mathcal{C}_{\mathcal{S}})=E\Bigg(\frac{\sum_{i\in
\mathcal{S}(\textbf{P})}\mathcal{C}_i}{\max(|\mathcal{S}(\textbf{P})|,
1)}\Bigg)\label{mainmeas}
\end{align}
Let us first illustrate how the choice of $\mathcal{C}$ is reflected
in the resulting error-rate. When
$\mathcal{C}=\textbf{I}_{\{V>0\}}$, this error measure is the
expected proportion of families with at least one type I error out
of all the selected families. In this case it is similar to OFDR
defined in Heller et al. (2009) in the framework of microarray
analysis. When $\mathcal{C}=FDP$, the error measure in
(\ref{mainmeas}) becomes less stringent: it is the expected average
$FDP$ over the selected families. The difference between the average
$FDP$ and the proportion of families with at least one type I error
may be very large. If three families are selected, with false
discovery proportions equal to 0.04, 0.05 and 0.06 respectively, the
average $FDP$ is 0.05, whereas the proportion of families with at
least one type I error is 1. The choice between these two
error-rates should be guided by the application. If one can bear
some false discoveries in the selected families as long as the
average $FDP$ over the selected families is small, the control of
the expected average $FDP$ may suffice. Alternatively, if one wishes
to avoid even one false discovery in a selected family, control of
the expected proportion of families with at least one type I error
would be appropriate.

Now we would like to illustrate the difference between controlling
the expected average FDP over the selected families and controlling
the FDR globally for the combined set of discoveries. Assume 40
families of hypotheses are selected. There are 36 families with one
rejection in each, and there are no false discoveries in these
families. In each of the remaining 4 families there are 10
rejections, 5 out of which are false discoveries. Thus in 36
selected families $FDP=0$, while in the remaining 4 families
$FDP=0.5$. The average $FDP$ over the selected families is
$\frac{4\times0.5}{40}=0.05.$ The total number of discoveries is 76,
20 out of which are false discoveries. Therefore the $FDP$ for the
combined set of discoveries is $\frac{20}{76}=0.26.$ This simple
example suggests that control of the expected average $FDP$ over the
selected families does not imply control of the FDR for the combined
set of discoveries. Let us now illustrate that controlling the FDR
for the combined set of discoveries need not guarantee control of
the expected average FDP over the selected families. Assume that
there are 20 selected families with one erroneous and one correct
rejection, whereas in each of the other 20 families there are 18
rejections, all of which are correct. The total number of
discoveries is 400, 20 out of which are false discoveries.
Therefore, the $FDP$ for the combined set of discoveries is
$\frac{20}{400}=0.05$. However, the average $FDP$ over the selected
families is $\frac{20\times0.5}{40}=0.25$.

Controlling an error-rate on the average over the selected families
may have important advantages versus controlling it globally for the
combined set of discoveries. It does give some level of confidence
in the discoveries within each selected family - even if only on the
average. In many applications controlling an error-rate on the
average over the selected families is simply a more appropriate
measure of error for the interpretation of the results than
controlling an error-rate globally for the combined set of
discoveries. This important point is addressed in Section 3, where
we discuss the structure of the families in view of the relevance of
the control on the average over the selected, and illustrated with
an application in Section 4. Even in the problems where no selection
takes place, Efron (2008) argues that that one should obtain control
of an error-rate in each family separately, implying control on the
average (over all the families). When the selection takes place,
control on the average (over the selected families) becomes even
more important. Finally, in some cases power may be gained by
controlling an error-rate on the average rather than globally for
the combined set of discoveries, even though this is not the
motivating reason for our emphasizing the control on the average
over the selected.

Control on the average over the selected is a manifestation of
selective inference ideas developed in Benjamini and Yekutieli
(2005). In that paper the authors made an important distinction
between simultaneous and selective goals in inference on multiple
parameters, in the context of confidence intervals (CIs) for the
selected parameters. Simultaneous inference is relevant when the
control of the probability that at least one CI does not cover its
parameter is needed. As a result, the simultaneous control also
holds for any selected subset. However, when CIs are built only for
one set of selected parameters, the goal need not be that strict,
and the authors suggest a more liberal property: the control of the
expected proportion of parameters not covered by their CIs among the
selected parameters, where the proportion is 0 if no parameter is
selected (FCR). Setting the goal of selective inference for the
testing of multiple families and adopting the error measure in
(\ref{mainmeas}) is analogous to FCR in the framework of building
CIs for the selected parameters.

Note that $E(\mathcal{C}_\mathcal{S})$ reduces to (\ref{av}), when
$\mathcal{S}(\textbf{P})\equiv\{1,\ldots,m\}$, i.e. when there is
practically no selection. Unfortunately, when the error measure is
averaged over the selected families, the expectation is not
controlled, as demonstrated in the following example.
\begin{ex}\label{ex1}
A family of $n$ hypotheses, for each of which we have a $p$-value,
is selected if the minimum $p$-value in it is less than $0.05$. Each
selected family is tested using Bonferroni procedure at level
$\alpha=0.05$. 
Further assume we have $m$ such families, and all the null
hypotheses are true (with uniformly distributed $p$-values).
Obviously, the expected value of averaged $\textbf{I}_{\{V \geq
1\}}$ is also controlled when the average is taken over all
families. Let us demonstrate what happens to the average
$\mathcal{C}=\textbf{I}_{\{V \geq 1\}}$ (average FWER hereafter)
when taken only over the selected families, namely to
$E(\mathcal{C}_{\mathcal{S}})$, over various values of $m$ and $n$.
In this case it can be explicitly determined (see Appendix 1), and
is given in the following table.
\begin{table}[h]
\begin{center}
\begin{tabular}{cccc}
  $m$ & $n$ & $E(|\mathcal{S}(\textbf{P})|/m)$ &
  $E(\mathcal{C}_{\mathcal{S}})$\\
  20& 100 & 0.99 & 0.049 \\
  100 & 20 & 0.64 & 0.076 \\
  100 & 10 & 0.40 & 0.122 \\
  100 & 2 & 0.1 & 0.506 \\
\end{tabular}
\caption{Illustration of the selection bias in Example \ref{ex1}.
There are $m$ families with $n$ hypotheses in each. All  hypotheses
are null. $\mathcal{S}(\textbf{P})$ is the selected set of families,
containing all families the minimum p-value in which is less than
$0.05$. Each selected family is tested using the Bonferroni
procedure at level $0.05$, assuring that
$E(\mathcal{C}_i)=E(\textbf{I}_{\{V_i\geq 1\}})\leq 0.05$.
$\mathcal{C}_{\mathcal{S}}$ is the average number of families where
at least one type I error was made. It can be seen that as the
selection becomes more stringent, the selection bias is more
severe.}
\end{center}
\end{table}

\noindent One can immediately observe from the last column that in
this example the average FWER over the selected families can climb
high and reach above $0.5$, while with no selection the level should
be $0.05$. It is also clear that the average FWER over the selected
increases when the extent of selection (presented in the third
column) becomes more extreme. Similar results were observed for
average PFER ($E(\mathcal{C}_\mathcal{S})$ for $\mathcal{C}=V$)
rather than average FWER. In this particular example the extent of
selection does not depend on the number of families $m$, but only on
$n$, but this need not be the case for other selection rules.
\end{ex}

The main result of this paper is that in order to assure the control
of $E(\mathcal{C}_{\mathcal{S}})$, we should control for
$E(\mathcal{C}_i)$ in each selected family $i$ at a more stringent
level: the nominal level $q$ should be multiplied by the proportion
of the selected families among all the families. This result, under
some limiting conditions, is the focus of Theorem 2.1. A general
result of the same nature, covering more complicated selection
rules, such as multiple comparisons procedures that make use of
plug-in estimators, is given in Theorem 2.2.


\section{Selection adjusted testing of families}
When all the families are selected with probability 1, no adjustment
to the testing levels should be done because the average over the
selected families is the average over all. As the selection rule is
more stringent and tends to select less families, the adjustment
should be more severe. For clarity of exposition and enhancing
intuition, we first introduce the adjustment for simple selection
rules, first introduced by Benjamini and Yekutieli (2005) in the
context of parameter selection, and only then turn to the general
case.
\begin{defn}\label{defsimple}(Simple selection rule) A selection
rule is called simple if for each selected family $i$, when the
p-values not belonging to family $i$ are fixed and the p-values
inside family $i$ can change as long as family $i$ is selected, the
number of selected families remains unchanged.
\end{defn}
It is easy to see that many selection rules are indeed simple in the
above sense. Any rule where a family is selected based only on its
own p-values is a simple selection rule, as in Example 1.1.
In Section \ref{hyptest} we show, that when the selection of the
families is done using hypothesis testing, the widely used step-up
and step-down multiple testing procedures provide simple selection
rules, even though the decision whether a family is selected or not
depends on the p-values belonging to other families as well.
However, not all the selection rules are simple: examples are
adaptive multiple testing procedures, as noted in Section
\ref{hyptest}.

The following procedure offers the selection adjustment when the
families are selected using a simple selection rule.
\begin{proc}[Simple Selection-Adjusted Procedure]\label{mainproc1} $\,$
\smallskip

     \noindent 1. Apply the selection rule $\mathcal{S}$ to the ensemble of sets
$\textbf{P}$, identifying the selected set of families
$\mathcal{S}(\textbf{P})$. Let $R$ be the number of selected
families (i.e. $R=|\mathcal{S}(\textbf{P})|$). \\2. Apply
$E(\mathcal{C})$-controlling procedure in each selected family
separately at level
\begin{align*}\frac{Rq}{m}.\end{align*}
\end{proc}
\begin{thm}\label{mainthm} If the p-values across the families are independent, then for any
simple selection rule $\mathcal{S}(\textbf{P})$, for any error-rate
$E(\mathcal{C})$ such that $\mathcal{C}$ takes values in a countable
set, and for any $E(\mathcal{C})$-controlling procedure valid for
the dependency structure inside each family, the Simple
Selection-Adjusted Procedure guarantees
$E(\mathcal{C}_{\mathcal{S}})\leq q$.
\end{thm}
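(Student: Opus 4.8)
The plan is to expand $E(\mathcal{C}_{\mathcal{S}})$ as a sum of $m$ terms, one per family, bound each term by $q/m$, and add them up. First note that a family that is not selected is not tested and hence makes no rejections; since each of the error measures $\mathcal{C}$ under consideration vanishes when there are no rejections, $\mathcal{C}_i=\mathcal{C}_i\,\textbf{I}_{\{i\in\mathcal{S}(\textbf{P})\}}$ for every $i$. Writing $R=|\mathcal{S}(\textbf{P})|$ and using linearity of expectation,
\[
E(\mathcal{C}_{\mathcal{S}})=E\!\left(\frac{\sum_{i=1}^{m}\mathcal{C}_i\,\textbf{I}_{\{i\in\mathcal{S}(\textbf{P})\}}}{\max(R,1)}\right)=\sum_{i=1}^{m}E\!\left(\frac{\mathcal{C}_i\,\textbf{I}_{\{i\in\mathcal{S}(\textbf{P})\}}}{\max(R,1)}\right),
\]
so it suffices to show $E\bigl(\mathcal{C}_i\,\textbf{I}_{\{i\in\mathcal{S}(\textbf{P})\}}/\max(R,1)\bigr)\le q/m$ for each fixed $i$.

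Fix $i$ and condition on $\textbf{P}_{-i}$, the p-values of the families other than $i$. Independence across families implies that conditioning on $\textbf{P}_{-i}$ leaves the joint law of $\textbf{P}_i$ unchanged. Here the definition of a simple selection rule does the essential work: with $\textbf{P}_{-i}$ fixed, the number of selected families is constant as $\textbf{P}_i$ ranges over the set of values for which $i$ is selected, so on the event $\{i\in\mathcal{S}(\textbf{P})\}$ we may write $R=R_{-i}$ for a deterministic $R_{-i}=R_{-i}(\textbf{P}_{-i})\ge1$. Consequently, on that event the inner $E(\mathcal{C})$-controlling procedure is applied in family $i$ at the fixed level $R_{-i}q/m$. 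Let $\mathcal{C}_i'$ denote the error incurred by running that procedure at the fixed level $R_{-i}q/m$ on $\textbf{P}_i$ regardless of whether $i$ is selected; then $\mathcal{C}_i\,\textbf{I}_{\{i\in\mathcal{S}(\textbf{P})\}}\le\mathcal{C}_i'$ pointwise, since the two coincide on $\{i\in\mathcal{S}(\textbf{P})\}$ while $\mathcal{C}_i'\ge0$ elsewhere. Because the chosen procedure, run at the fixed level $R_{-i}q/m$, is $E(\mathcal{C})$-controlling and valid for the within-family dependence, $E(\mathcal{C}_i'\mid\textbf{P}_{-i})\le R_{-i}q/m$, and therefore
\[
E\!\left(\frac{\mathcal{C}_i\,\textbf{I}_{\{i\in\mathcal{S}(\textbf{P})\}}}{\max(R,1)}\,\middle|\,\textbf{P}_{-i}\right)\le\frac{1}{R_{-i}}\,E(\mathcal{C}_i'\mid\textbf{P}_{-i})\le\frac{q}{m}
\]
(if no value of $\textbf{P}_i$ makes $i$ selected, the left-hand side is simply $0$). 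Taking expectations over $\textbf{P}_{-i}$ and summing over $i=1,\dots,m$ yields $E(\mathcal{C}_{\mathcal{S}})\le q$.

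The delicate point — and where I expect the hypothesis that $\mathcal{C}$ takes values in a countable set to be used — is making this conditional reduction rigorous: verifying that $R_{-i}$ is a measurable function of $\textbf{P}_{-i}$, that restricting to $\{i\in\mathcal{S}(\textbf{P})\}$ genuinely freezes the family-$i$ testing level at a constant, and that the manipulations of conditional expectations above are legitimate. I would carry this out by decomposing $E(\mathcal{C}_{\mathcal{S}})$ first over the finitely many possible values $k$ of $R$, and if necessary over the countably many values of $\mathcal{C}_i$, running the argument on each event $\{i\in\mathcal{S}(\textbf{P}),\,R=k\}$ — where the family-$i$ testing level is the honest constant $kq/m$ — and then re-summing; nonnegativity of every quantity makes the re-summation and the exchanges with expectation unconditional. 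Everything else in the proof is routine.
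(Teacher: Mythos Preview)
Your proposal is correct and follows essentially the same argument as the paper. The paper makes the decomposition over $k$ (the possible values of $R$) and over the countable values $c$ of $\mathcal{C}_i$ explicit from the outset---defining events $C_k^{(i)}$ on the space of $\textbf{P}^{(i)}$ exactly as you define $R_{-i}$, then factoring by independence---and you correctly anticipate that the countable-support hypothesis is invoked precisely where the paper uses it, namely in writing $E(\mathcal{C}_i)=\sum_{c}c\,\textmd{Pr}(\mathcal{C}_i=c)$ inside the decomposition.
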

\begin{rem} For all error-rates known to us, $\mathcal{C}$ is
a count or a ratio of counts, so the condition on the values
$\mathcal{C}$ takes is satisfied.\end{rem}
\begin{proof}[Proof of Theorem \ref{mainthm}]
The idea of the proof is similar to the proof of Theorem 1 in
Benjamini and Yekutieli (2005). For each error criterion
$E(\mathcal{C})$, let $\mathcal{C}_+$ be the countable support of
$\mathcal{C}$. Since the selection rule is simple, we can define the
following event on the space of all the p-values not belonging to
family $i$: if family $i$ is selected , $k$ families are selected
including family $i$. Denote this event by $C_k^{(i)}$. For any
simple selection rule
\begin{align}
E(\mathcal{C}_{\mathcal{S}})=\sum_{i=1}^m\sum_{k=1}^m\frac{1}{k}\sum_{c\in\mathcal{C}_+}c\textmd{Pr}(\mathcal{C}_i=c,
\, i\in \textit{S}(\textbf{P}), C_k^{(i)})\label{forappb}
\end{align}
 Note that Simple Selection-Adjusted
Procedure does not reject any hypothesis in families which are not
selected. Therefore $\mathcal{C}_i=0$ for each family $i$ that is
not selected. Hence, for this procedure we obtain
\begin{align}
E(\mathcal{C}_{\mathcal{S}})&=\sum_{i=1}^m\sum_{k=1}^m\frac{1}{k}\sum_{c\in\mathcal{C}_+}c\textmd{Pr}(\mathcal{C}_i=c,
\,
C_k^{(i)})\notag\\&=\sum_{i=1}^m\sum_{k=1}^m\frac{1}{k}\sum_{c\in\mathcal{C}_+}c\textmd{Pr}(\mathcal{C}_i=c
)
\textmd{Pr}(C_k^{(i)})\label{c1}\\&=\sum_{i=1}^m\sum_{k=1}^m\frac{1}{k}E(\mathcal{C}_i
)
\textmd{Pr}(C_k^{(i)})\label{imp1}
\end{align}
%
%
%
%
%
Equality (\ref{c1}) follows from the independence between $P_i$ and
the set of p-values not belonging to family $i$, for each
$i=1,\ldots,m$.  In expression (\ref{imp1}), for each $k$ and $i$,
$\mathcal{C}_i$ is the value of random variable $\mathcal{C}$ in
family $i$, when a valid $E(\mathcal{C})$-controlling procedure is
applied at level $\frac{kq}{m}$ in each selected family. 
Since there are no rejections in families that are not selected,
$\mathcal{C}_i$ takes the value 0 there, so $E(\mathcal{C}_i)\leq
\frac{kq}{m}$ for each $i=1,\ldots,m$.
Now, using this inequality and the fact that
$\sum_{k=1}^m\textmd{Pr}(C_k^{(i)})=1$ for each $i=1,\ldots,m$, we
obtain
\begin{align}
\sum_{i=1}^m\sum_{k=1}^m\frac{1}{k}E(\mathcal{C}_i)
\textmd{Pr}(C_k^{(i)})\leq\sum_{i=1}^m\sum_{k=1}^m\frac{1}{k}\cdot\frac{kq}{m}\cdot\textmd{Pr}(C_k^{(i)})=q\label{c2}
\end{align}
Results (\ref{imp1}) and (\ref{c2}) complete the proof.
\end{proof}

Theorem \ref{mainthm} supplies the adjustment of the testing level
in each selected family which is sufficient for the control of
$E(\mathcal{C}_S)$ when the selection rule is simple. We will now
show that in some special cases this adjustment is necessary,
adopting Example 6 in Benjamini and Yekutieli (2005) for our needs.
\begin{ex}
Assume all the families are of equal size, $n$. All the hypotheses
are null, all the p-values are jointly independent and uniformly
distributed. Let us order the families by their minimal p-values.
The simple selection rule is to choose the $k$ families with the
smallest minimal p-values. Assume that each selected family is
tested using the Bonferroni procedure at level $q'$. In this case
the average error-rate over the
 selected is
\begin{align*}
E\left(\mathcal{C}_\mathcal{S}\right)=\frac{E\left(\sum_{i\in
\mathcal{S}(\textbf{P})}V_i\right)}{k}
\end{align*}
\end{ex}
The families where at least one type I error is made are
the families with the smallest minimal p-values. Therefore, if 
$\sum_{i=1}^m\textbf{I}_{\{V_i\geq 1\}}\leq k$, we obtain
$$\sum_{i\in \mathcal{S}(\textbf{P})}V_i=\sum_{i=1}^mV_i$$
Note that $\sum_{i=1}^mV_i\sim Binom\left(mn, \frac{q'}{n}\right)$,
implying that $E(\sum_{i=1}^mV_i)=mq'$. Hence, using Markov's
inequality we obtain
\begin{align}
\textmd{Pr}\left(\sum_{i=1}^m\textbf{I}_{\{V_i\geq 1\}}>k\right)\leq
\textmd{Pr}\left(\sum_{i=1}^mV_i>k\right)\leq\frac{mq'}{k}
\end{align}
Note that $q'\leq q$, where $q$ is the desired level of
$E(\mathcal{C}_\mathcal{S})$ and is typically less than 0.05.
Therefore, when $\frac{m}{k}$ is not much larger than 1, (say
$k=\frac{3m}{4}$ or $k=m-3$ where $m$ is large), $\frac{mq'}{k}$ is
very small and we can neglect it. Then, we obtain
\begin{align}
\frac{E\left(\sum_{i\in
\mathcal{S}(\textbf{P})}V_i\right)}{k}\approx
\frac{E(\sum_{i=1}^mV_i)}{k}=\frac{mq'}{k},
\end{align}
and the adjustment $q'=\frac{kq}{m}$ is necessary for assuring that
$E(\mathcal{C}_\mathcal{S})\leq q$.

The following procedure offers selection adjustment for any
selection rule. This procedure reduces to Procedure \ref{mainproc1}
when the selection rule is simple.
\begin{proc}[Selection-Adjusted Procedure]\label{mainproc2} $\,$
\smallskip

     \noindent 1. Apply the selection rule $\mathcal{S}$ to the ensemble of sets
$\textbf{P}$, identifying the selected set of families
$\mathcal{S}(\textbf{P})$. \\2. For each selected family $i$,
$i\in\mathcal{S}(\textbf{P})$, partition the ensemble of sets
\textbf{P} into $P_i$ (set of the p-values belonging to family $i$)
and $\textbf{P}^{(i)}$ (the ensemble of sets \textbf{P} without the
set $P_i$) and find:
\begin{align}
R_{min}(\textbf{P}^{(i)}):=\min_{\underline{p}}\{|\mathcal{S}(\textbf{P}^{(i)},
P_i=\underline{p})|:i\in\mathcal{S}(\textbf{P}^{(i)},
P_i=\underline{p})\}\label{rmin},
\end{align}
the minimal number of selected families when family $i$ is selected
and the p-values for other families do not change.
\\3. For each selected family $i$, apply $E(\mathcal{C})$-controlling procedure at level
\begin{align*}\frac{R_{min}(\textbf{P}^{(i)})q}{m}\end{align*}
\end{proc}
\begin{thm}\label{mainthm2} If the p-values across the families are independent,
then for any selection rule $\mathcal{S}(\textbf{P})$ and for any
$E(\mathcal{C})$-controlling procedure valid for the dependency
structure inside each family, the Selection-Adjusted Procedure
guarantees $E(\mathcal{C}_{\mathcal{S}})\leq q$.
\end{thm}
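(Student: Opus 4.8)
The plan is to follow the proof of Theorem~\ref{mainthm} almost verbatim, with one change dictated by the failure of simplicity: for a general rule the number of selected families is no longer determined by the p-values outside a given selected family, so I would replace the count ``$k$'' used there by the $\textbf{P}^{(i)}$-measurable surrogate $R_{min}(\textbf{P}^{(i)})$ of (\ref{rmin}). First I would record the two structural facts about the Selection-Adjusted Procedure: it rejects nothing in families that are not selected, so $\mathcal{C}_i=0$ on $\{i\notin\mathcal{S}(\textbf{P})\}$; and, by the very definition of $R_{min}$, on $\{i\in\mathcal{S}(\textbf{P})\}$ one has $|\mathcal{S}(\textbf{P})|\ge R_{min}(\textbf{P}^{(i)})\ge 1$. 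Using the first fact to restrict the sum in (\ref{mainmeas}) to selected families (where the $\max$ is inactive) and the second fact together with $\mathcal{C}_i\ge 0$, I get
\begin{align*}
E(\mathcal{C}_{\mathcal{S}})=\sum_{i=1}^m E\!\left(\frac{\mathcal{C}_i\,\textbf{I}_{\{i\in\mathcal{S}(\textbf{P})\}}}{|\mathcal{S}(\textbf{P})|}\right)\le\sum_{i=1}^m E\!\left(\frac{\mathcal{C}_i\,\textbf{I}_{\{i\in\mathcal{S}(\textbf{P})\}}}{R_{min}(\textbf{P}^{(i)})}\right).
\end{align*}

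Next I would condition, for each $i$ separately, on $\textbf{P}^{(i)}$. By independence across families the conditional law of $P_i$ is its marginal law, and both $R_{min}(\textbf{P}^{(i)})$ and the within-family testing level $R_{min}(\textbf{P}^{(i)})q/m$ are now constants. Given $\textbf{P}^{(i)}=\underline{p}^{(i)}$, the set of p-value vectors for family $i$ that make family $i$ selected is a fixed set $A(\underline{p}^{(i)})$, so the effective rule inside family $i$ is ``apply the chosen valid $E(\mathcal{C})$-controlling procedure at level $R_{min}(\underline{p}^{(i)})q/m$ if $P_i\in A(\underline{p}^{(i)})$, reject nothing otherwise.'' Since rejecting nothing yields $\mathcal{C}_i=0$ and error measures are nonnegative, the error of this effective rule is dominated pointwise by the error of the valid procedure run unconditionally on family $i$; hence, by validity of that procedure at level $R_{min}(\underline{p}^{(i)})q/m$,
\begin{align*}
E\!\left(\mathcal{C}_i\,\textbf{I}_{\{i\in\mathcal{S}(\textbf{P})\}}\mid\textbf{P}^{(i)}\right)=E\!\left(\mathcal{C}_i\mid\textbf{P}^{(i)}\right)\le\frac{R_{min}(\textbf{P}^{(i)})\,q}{m}.
\end{align*}

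Combining the two steps via the tower property, for each $i$
\begin{align*}
E\!\left(\frac{\mathcal{C}_i\,\textbf{I}_{\{i\in\mathcal{S}(\textbf{P})\}}}{R_{min}(\textbf{P}^{(i)})}\right)=E\!\left[\frac{1}{R_{min}(\textbf{P}^{(i)})}\,E\!\left(\mathcal{C}_i\,\textbf{I}_{\{i\in\mathcal{S}(\textbf{P})\}}\mid\textbf{P}^{(i)}\right)\right]\le E\!\left[\frac{1}{R_{min}(\textbf{P}^{(i)})}\cdot\frac{R_{min}(\textbf{P}^{(i)})q}{m}\right]=\frac{q}{m},
\end{align*}
and summing over $i=1,\dots,m$ gives $E(\mathcal{C}_{\mathcal{S}})\le q$. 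Note that, unlike in Theorem~\ref{mainthm}, one does not need $\mathcal{C}$ to have countable support here: the argument only uses $\mathcal{C}_i\ge 0$, independence across families, and the fact that suppressing rejections cannot increase the error.

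The main obstacle --- and the reason Procedure~\ref{mainproc2} must pay with the worst-case $R_{min}(\textbf{P}^{(i)})$ in place of the realized count --- is the conditioning step: for a non-simple rule, $|\mathcal{S}(\textbf{P})|$ remains random after $\textbf{P}^{(i)}$ is fixed and cannot be taken outside the conditional expectation, so one is forced to the $\textbf{P}^{(i)}$-measurable lower bound $R_{min}(\textbf{P}^{(i)})$. This is exactly what restores the factorization that made the simple case work, and the bound is tight --- so the procedure reduces to Procedure~\ref{mainproc1} --- precisely when $R_{min}(\textbf{P}^{(i)})=|\mathcal{S}(\textbf{P})|$ on $\{i\in\mathcal{S}(\textbf{P})\}$, i.e. when the rule is simple. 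A secondary point worth stating carefully in the write-up is the pointwise domination claim used in the second paragraph, namely that the ``restrict-to-$A$'' modification of a valid procedure is again a valid procedure at the same level; this is immediate from $\mathcal{C}\ge 0$ but is the one place where the structure of admissible error measures is actually used.
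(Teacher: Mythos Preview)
Your proof is correct and follows essentially the same route as the paper's Appendix~B: both replace the realized count $|\mathcal{S}(\textbf{P})|$ in the denominator by the $\textbf{P}^{(i)}$-measurable lower bound $R_{min}(\textbf{P}^{(i)})$ and then exploit independence across families, the paper via the partition $C_k^{(i)}=\{R_{min}(\textbf{P}^{(i)})=k\}$ and you via direct conditioning on $\textbf{P}^{(i)}$. Your conditioning version is a cleaner packaging of the same idea and, as you observe, has the small bonus of not needing $\mathcal{C}$ to have countable support.
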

The proof of Theorem \ref{mainthm2} is given in Appendix B.
\begin{rem}
We could guarantee that in each selected family at least one
rejection is made by applying repeatedly the simple
Selection-Adjusted Procedure, selecting each time the families where
at least one rejection is made and adjusting the testing level at
each iteration according to the proportion of selected families (out
of all the families) at the previous iteration, until in each
selected family at least one rejection is made.

Interestingly, when each family consists only of one hypothesis and
each selected hypothesis is rejected at level $q$ if its p-value is
less than $q$, this iterative application of the Simple
Selection-Adjusted Procedure is equivalent to the Benjamini and
Hochberg procedure (BH hereafter, see Benjamini and Hochberg, 1995)
applied on the whole set of p-values (provided that at the first
iteration we select the hypotheses with p-values less than $q$).
Obviously, the testing procedure applied in each selected family is
an FWER-controlling procedure in this case, and the expected average
value of $\textbf{I}_{\{V \geq 1\}}$ over the selected families is
the expected proportion of type I errors out of all the selected
hypotheses. Since it is guaranteed that each selected hypothesis is
rejected, this is actually the FDR of the whole set of discoveries.
\end{rem}
\section{Selection of the families via multiple hypothesis
testing}\label{hyptest}
If the selected families are considered as scientific findings by
themselves, a situation often encountered in large testing problems,
it would be appropriate to address the erroneous selection of a
family, and control some error-rate of the selection process, as,
for example suggest Heller et al. (2009) for selecting gene sets in
microarray analysis and Sun and Wei (2011) for analyzing time-course
experiments. We may associate each family with its global null
(intersection) hypothesis and use the inside-family p-values in
order to construct a valid p-value for its intersection hypothesis.
(See Loughin (2004) for a systematic comparison of combining
functions that can be used for this purpose). Then we may apply a
multiple testing procedure on these combined p-values and select the
families for which the global null hypothesis is rejected. The
choice of the multiple testing procedure should be guided by the
error rate that we wish to control at the family level and the
dependency among the combined p-values.

Heller et al. (2009) address a similar problem of inference across
families of hypotheses in microarray analysis. They first select
promising gene sets and then look for differentially expressed genes
within these gene sets. They define an erroneous discovery of a set
if a set is selected while no gene in the set is differentially
expressed, or if a set is appropriately selected but one of the
genes in the set is erroneously discovered. They define the Overall
FDR criterion (OFDR), as the expected proportion of "erroneous"
discoveries of gene sets out of all the selected gene sets. This
error criterion is equivalent to $E(\mathcal{C}_\mathcal{S})$ for
$\mathcal{C}=\textbf{I}_{\{V>0\}}$ when it is guaranteed that in
each family (gene set) at least one rejection is made. This
condition is not always fulfilled. For example, when the signal in
the family is weak, it may be possible to see evidence that that
there is at least one signal in this family, but impossible to point
out where this signal is. In these cases our criterion does not
coincide with the OFDR. In order to see it, suppose an all-null
family is selected, and there are no rejections inside this family.
This family will have no contribution to $\mathcal{C}_\mathcal{S}$,
whereas it will have a contribution to the proportion of "erroneous"
discoveries of gene sets out of all the selected gene sets, as
defined by Heller et al. (2009).

In Heller et al. (2009) the division of the hypotheses into families
is determined by the problem. In many applications each hypothesis
carries two "tags", that is the hypotheses have two-ways structure.
The families can be constructed by pooling along either dimension.
In these cases the researcher should define the families by the most
important dimension for inference. In Section 4 we show an example
of such an application.



\subsection{Simple and non-simple selection rules}\label{simple}
In Section 2 we defined what a simple selection rule is. 
It is obvious that any single-step multiple testing procedure
satisfies this condition, since the cutoff for rejection does not
depend on the other p-values. In addition, any step-up and step-down
procedure defines a simple selection rule. See Appendix C for a
proof.

Important procedures which define selection rules that are not
simple are the adaptive FDR procedures (Benjamini and Hochberg,
2000, Storey et al., 2004, Benjamini et al., 2006, Blanchard and
Roquain, 2009). Let us consider the two-stage procedure given in
Benjamini et al. (2006). At the first stage, the hypotheses are
tested using the BH procedure at level $q'=\frac{q}{1+q}$. The
estimator of the number of true null hypotheses is
$\widehat{m}_0=m-R$, where $m$ is the total number of hypotheses and
$R$ is the number of rejected hypotheses at the first stage. If
$\widehat{m}_0=0$, the procedure rejects all the hypotheses.
Otherwise, the procedure rejects the hypotheses rejected by the BH
procedure at level $\frac{m}{\widehat{m}_0}q'$. The following shows
that this procedure is not simple.
\begin{ex}
Assume there are 3 hypotheses, $\{H_{0i}\}_{i=1}^3$. Let $P_1$,
$P_2$, and $P_3$ be the corresponding p-values. If
$P_1<\frac{q'}{3}$, $\frac{q'}{3}<P_2<\frac{2q'}{3}$, and
$\frac{3q'}{2}<P_3<3q'$, $\widehat{m}_0=1$ and all the hypotheses
are rejected. Fix $P_1$ and $P_3$, and increase $P_2$ so that
$\frac{2q'}{3}<P_2<q'$. Now $\widehat{m}_0=2$, therefore $H_{02}$ is
still rejected, but the total number of rejections changes from 3 to
2.
\end{ex}

\section{Associating SNPs with brain volume}\label{realdataex}
We would like to show the relevance of our approach to the voxelwise
genome-wide association study performed by Stein et al. (2010). The
authors explore the relation between each of 448293 Single
Nucleotide Polymorphisms (SNPs)  and each of 31622 voxels of the
entire brain across 740 elderly subjects, including subjects with
Alzeimer's disease, Mild Cognitive Impairment, and healthy elderly
controls from the Alzeimer's Disease Neuroimaging Initiative (ADNI).
The phenotype of interest was the percentage volume difference
relative to a sample specific template at each voxel, and a
regression was conducted at each SNP with the phenotype as the
dependent variable and the number of minor alleles, age and sex as
the independent variables (assuming the additive genetic model). In
the original analysis for each voxel only the most significantly
associated SNP was considered. Its p-value was "corrected" in order
to obtain uniform distribution when no SNP is associated with that
voxel. Then, the BH procedure was applied on the "corrected"
p-values. Two were found at the 0.5 level, but the 5 top SNPs were
selected for further research. This involved mapping the
significance of the voxels per each one of these 5 SNPs.

Note, that actually the authors first divided the set of hypotheses
into disjoint families, where each family was defined by a voxel,
and the hypotheses within the family where the hypotheses on the
association of each SNP with that voxel. The "corrected" p-value for
each voxel was the p-value for testing the global null hypothesis
for that voxel-family. Therefore, the authors selected the families,
i.e. the voxels where evidence for at least one non-null association
was obtained. Then, the authors considered the most associated SNPs
within the \emph{selected} voxels. So far, this analysis fits our
framework. At the last step, though, they returned and defined the 5
SNP-families as their findings looking at the significance of all
voxels within each SNP separately.

We would suggest another partition of the hypotheses into families.
As it can be understood from the paper, the authors are interested
to find SNPs associated with regions in the brain, and be able to
make maps of these regions. Therefore, it would be more appropriate
to define each family as the set of all the association hypotheses
for a specific SNP. This way, selection of the families would be
equivalent to the selection of SNPs, which could be followed by
finding the voxels associated with the selected SNPs.

The next question is what error-rates should be controlled in this
problem. It is obvious that the investigators do not wish to
emphasize each voxel-SNP pair where an association is found,
therefore there is no need to control for some error-rate globally,
on the combined set of all the discovered pairs. The emphasis is on
the selected SNPs and on the regions in the brain that could be
affected by these SNPs. Therefore, it would be reasonable to (1)
control for some
error-rate when selecting the SNPs 
(2) for each SNP, control for some error-rate when selecting the
voxels associated with that SNP,  and (3) control for the error-rate
in (2) on the average over the selected SNPs. The control on the
average over the selected guarantees the adjustment for selection
bias. The most common types of control in MRI analysis are FWER and
FDR. For FWER control on the average guarantees that the expected
proportion of SNPs where at least one voxel is erroneously declared
associated out of all the selected SNPs is bounded by a
pre-specified number (say 0.05). The FDR control on the average is a
more liberal property - it guarantees that the expected average over
the selected SNPs of the proportion of erroneously discovered voxels
per SNP is bounded.

 The control of some error-rate when selecting the SNPs could be achieved by defining the global null p-values
for each SNP and applying a multiple comparisons procedure on these
p-values, when the choice of the procedure should be guided by the
desired error-rate for the selection of SNPs (see Section
\ref{hyptest}). Theorems \ref{mainthm} and \ref{mainthm2} offer the
methods to obtain the control within SNPs and on the average over
the selected SNPs. According to these theorems, any commonly used
method in MRI research could be applied across voxels for each SNP
separately at the adjusted level: re-sampling or Random Field Theory
approaches for the control of FWER, or the BH procedure for the
control of FDR. Theorems \ref{mainthm} and \ref{mainthm2} however
assume independence across SNPs. This question is addressed
theoretically in the next section.
%
\section{Average control under dependency across the
families} All the results given so far hold when the p-values across
the families are independent. We will now consider the case where
the set of all the p-values possesses the positive regression
dependent on a subset (PRDS) property.

First recall that a set in $D$ in $R^n$ is increasing (decreasing)
if $x\in D$ and $y\geq x$ ($y\leq x$) implies that $y\in D$.

\begin{defn}(Benjamini and Yekutieli, 2001). The vector \textbf{X} is PRDS on $I_0$ if for any increasing set D
(where $x\in D$ and $y\geq x$ implies that $y\in D$) and for each
$i\in I_0$, $P(X \in D| X_i=x)$ is nondecreasing in $x$.
\end{defn}
In addition, we require that the selection rule be concordant, as
defined in Benjamini and Yekutieli (2005).
\begin{defn}(Benjamini and Yekutieli, 2005) A selection rule is
concordant if for each $i=1,\ldots,m$ and $k=1,\ldots,m$,
$\{\textbf{P}^{(i)}:k\leq R_{min}(\textbf{P}^{(i)})\}$  is a
decreasing set.
\end{defn}
It is easy to see that many selection rules are concordant. Both
selecting each family where its minimum p-value is less than $q$,
and selecting $k$ families with the smallest minimal p-values are
concordant selection rules. When the selection is made via
hypothesis testing, any step-up or step-down procedure is
concordant.
\begin{thm}\label{bhbonf} If the set of all the p-values is PRDS on the subset of
p-values corresponding to true null hypotheses, the selection rule
is concordant, and the procedure used for testing each selected
family is (1) Bonferroni procedure or (2) the BH procedure, then the
Selection-Adjusted Procedure guarantees in case (1):
\begin{align*}
E\Bigg(\frac{\sum_{i\in \mathcal{S}(\textbf{P})}V_i}{\max(R,
1)}\Bigg)\leq q
\end{align*}
and in case (2): \begin{align*}E\Bigg(\frac{\sum_{i\in
\mathcal{S}(\textbf{P})}Q_i}{\max(R, 1)}\Bigg)\leq q\end{align*}
\end{thm}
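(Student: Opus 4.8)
The plan is to follow the independence proof of Theorem \ref{mainthm2} (Appendix B) and replace the single place where independence of $\textbf{P}_i$ from $\textbf{P}^{(i)}$ is invoked — namely, to factor a joint probability — by the argument used by Benjamini and Yekutieli (2001) to prove FDR control under PRDS, with concordance supplying the required nested decreasing events. First I would reduce to a per-family, per-null sum. Since the Selection-Adjusted Procedure rejects nothing in unselected families, in case (1) $\sum_{i\in\mathcal{S}(\textbf{P})}V_i=\sum_{i=1}^m\sum_{j\in I_0\cap i}\textbf{I}_{\{H_j\text{ rejected}\}}$, and $H_j$ (for $j$ in family $i$, with $n_i=|\textbf{P}_i|$) is rejected exactly when family $i$ is selected and $P_j\le R_{min}(\textbf{P}^{(i)})q/(mn_i)$. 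On that event $R=|\mathcal{S}(\textbf{P})|\ge R_{min}(\textbf{P}^{(i)})\ge 1$, so $1/\max(R,1)\le 1/R_{min}(\textbf{P}^{(i)})$, and since $R_{min}(\textbf{P}^{(i)})$ is a function of $\textbf{P}^{(i)}$ alone,
\[
E\Bigg(\frac{\sum_{i\in\mathcal{S}(\textbf{P})}V_i}{\max(R,1)}\Bigg)\le\sum_{i=1}^m\sum_{j\in I_0\cap i}\sum_{k=1}^m\frac1k\,\textmd{Pr}\Big(P_j\le\tfrac{kq}{mn_i},\ R_{min}(\textbf{P}^{(i)})=k\Big).
\]

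By concordance the sets $D_k^{(i)}:=\{R_{min}(\textbf{P}^{(i)})\ge k\}$ are decreasing in the whole p-value vector; hence, $P_j$ being a true-null coordinate and the vector being PRDS on the nulls, $p\mapsto\textmd{Pr}(D_k^{(i)}\mid P_j=p)$ is nonincreasing. With super-uniform $P_j$, the nested decreasing events $D_1^{(i)}\supseteq D_2^{(i)}\supseteq\cdots$ enjoying this conditional monotonicity, and equally spaced thresholds $kc$ with $c=q/(mn_i)$, the key lemma of Benjamini and Yekutieli (2001) gives $\sum_{k}\frac1k\,\textmd{Pr}(P_j\le kc,\ D_k^{(i)}\setminus D_{k+1}^{(i)})\le c$. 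Summing over the at most $n_i$ true nulls in family $i$ bounds the family-$i$ contribution by $q/m$, and summing over $i$ yields $q$, proving case (1). (Here only $\textmd{Pr}(P_j\le t)\le t$ for true nulls is used, so composite nulls are covered, and Bonferroni is trivially a valid per-family procedure.)

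For case (2) the extra ingredient is the within-family BH structure. I would first note that, conditionally on $\textbf{P}^{(i)}$, the level $R_{min}(\textbf{P}^{(i)})q/m$ is a constant and the sub-vector $\textbf{P}_i$ inherits the PRDS-on-nulls property from the full vector; using $Q_i=0$ off $\mathcal{S}(\textbf{P})$ and $1/\max(R,1)\le 1/R_{min}(\textbf{P}^{(i)})$ on $\mathcal{S}(\textbf{P})$,
\[
E\Bigg(\frac{\sum_{i\in\mathcal{S}(\textbf{P})}Q_i}{\max(R,1)}\Bigg)\le\sum_{i=1}^m E\!\left(\frac{Q_i}{R_{min}(\textbf{P}^{(i)})}\right)=\sum_{i=1}^m\sum_{k=1}^m\frac1k\,E\big(Q_i\,\textbf{I}_{\{R_{min}(\textbf{P}^{(i)})=k\}}\big).
\]
Expanding $Q_i$ as in case (1) but keeping the within-family step-up rank $R_i$ produces a sum over $j\in I_0\cap i$, over $r$, and over $k$, with threshold $rkq/(mn_i)$ and events $\{R_i=r\}\cap(D_k^{(i)}\setminus D_{k+1}^{(i)})$; this is exactly the ``two nested step-up families'' configuration handled in the proof of the BH-under-PRDS theorem of Benjamini and Yekutieli (2001) (the within-family order statistics playing the role of the second family, the selection ranks that of the first, as in Benjamini and Yekutieli, 2005), and it again caps the $(i,j)$ contribution at $q/(mn_i)$, so the grand total is $\le q$.

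The main obstacle is precisely this coupling step: the level at which BH runs inside family $i$ is the random quantity $R_{min}(\textbf{P}^{(i)})q/m$, which is correlated with — not independent of — the within-family p-values, so one cannot simply condition on $\textbf{P}^{(i)}$ and quote BY2001 verbatim. Making it rigorous requires checking that (i) concordance makes each $D_k^{(i)}$ a genuinely decreasing subset of $\mathbb{R}^{\sum_i n_i}$, so that PRDS on the full null set delivers the needed monotonicity of $\textmd{Pr}(D_k^{(i)}\mid P_j=p)$; and (ii) in case (2) the product event $\{R_i=r\}\cap D_k^{(i)}$ retains enough monotonicity for the BY2001 lemma to apply to the joint $(r,k)$ summation — the point where the within-family BH structure and the selection-rank structure must be made to interact. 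Degenerate cases (families that can never be selected, where $R_{min}=\infty$ and $V_i=Q_i\equiv 0$; ties among p-values) contribute nothing and are dispatched separately.
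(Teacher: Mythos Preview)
Your case (1) argument is correct and matches the paper's proof: the same per-null-hypothesis decomposition, the same use of concordance to produce nested monotone events indexed by $k$, and the same PRDS-based telescoping to obtain $\sum_{k}\textmd{Pr}(C_k^{(i)}\mid P_{ij}\le kq/(mm_i))\le 1$ (with $m_i$ the size of family $i$, your $n_i$).

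For case (2) you have the correct outline up to the point you yourself flag as the obstacle, but you do not supply the missing idea. Two remarks. First, the aside that ``conditionally on $\textbf{P}^{(i)}$ the sub-vector $\textbf{P}_i$ inherits PRDS'' is unjustified: PRDS of the full vector does not in general descend to conditional distributions, so that shortcut is unavailable (and you correctly retreat from it in your obstacle paragraph). Second, and more importantly, your appeal to a ready-made ``two nested step-up families'' result in Benjamini and Yekutieli (2001) is a hope rather than a citation: BY2001 handles a single step-up index, and the joint event $\{R_i=r\}\cap\{R_{min}(\textbf{P}^{(i)})=k\}$ is monotone in neither coordinate, so the one-index PRDS telescoping does not apply to the double sum over $(r,k)$ as it stands. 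The paper's key device --- which is absent from your proposal --- is to reindex by the \emph{product} $t=kr$, collect all pairs $(k,r)$ with $kr=t$ into a level set $I_t$, and prove that the cumulative unions
\[
A_s^{(ij)}=\bigcup_{t\le s}\ \bigcup_{(k,r)\in I_t}\bigl(C_k^{(i)}\cap B_{r}^{(ij)}[k]\bigr)
\]
are increasing sets in the full p-value vector, by rewriting each piece as $D_k^{(i)}\cap G_{r}^{(ij)}[k]$ with both factors increasing (here $D_k^{(i)}=\{R_{min}(\textbf{P}^{(i)})\le k\}$, and $G_{r}^{(ij)}[k]$ is an explicit increasing tail event on the order statistics of $\textbf{P}_i\setminus\{P_{ij}\}$). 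After this reindexing the threshold is $tq/(mm_i)$ with a single index $t$, the events $A_s^{(ij)}$ are nested and increasing, and only then does the PRDS telescoping of case (1) go through to give $\sum_{t}\sum_{(k,r)\in I_t}\textmd{Pr}(C_k^{(i)},B_{r}^{(ij)}[k]\mid P_{ij}\le tq/(mm_i))\le 1$. This product-index trick is the substance of the case-(2) proof; your point (ii) names the difficulty without resolving it.
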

The proof is given in Appendix D.

\section{Discussion}
There have been very few works (outside Heller et al. (2009)
discussed in Section 3) that address formally the issue of inference
across families. We have mentioned Efron (2008) in the Introduction.
Other works dealing with this issue are Hu et al. (2010) and Sun and
Wei (2011). Neither of these last mentioned papers address the
testing of multiple families of hypotheses within the framework of
selective inference, which is the concern in our work. Testing each
family separately while attending to some error-rate control within
each tested family has an obvious advantage that the control is
achieved on the average across families. However, once only some
families are selected based on the same data, and inference is made
or reported only on the selected ones even this simple average
error-rate across families deteriorates. In this note we pointed at
this danger, formulated it, and offered simple - even if not optimal
- ways to address it. Sometimes, the situation faced calls for more
stringent control. This is the case when interest lies in assuring
simultaneous control of the error-rate across families, and not
merely on the average over the selected.

Such a concern for simultaneity of inference across selected
families can be formulated by $E(\max_{i=1,\ldots,m}
\mathcal{C}_i)$. For example, in the case
$\mathcal{C}_i=\mathbf{I}_{\{Q_i>\gamma\}}$ this is the probability
that in at least one family the false discovery proportion is
greater than $\gamma$. It is easy to see that controlling
$E(\mathcal{C}_i)$ at level $\frac{q}{m}$ in \textit{each} family
guarantees the control of this error criterion. However, usually in
applications the interest does not lie in all the families, but only
in the promising ones. Therefore we address only the selective goal
in this case.

It may sometimes happen that there are no rejections in a selected
family. For example, when the signal in the family is weak, it may
be possible to see evidence that there is at least one signal in
this family, but impossible to point out where the signal is. Some
investigators may claim that in this case the interpretation of the
results is not intuitive, therefore they wish to have at least one
rejection in each selected family. This can be easily done by
choosing appropriate selecting and testing procedures. It is easy to
see that if the testing procedure used for selecting the families is
a stepwise procedure with critical values less than or equal to
$\{\frac{iq}{m}\}_{i=1}^m$ and the global null p-value for each
family is not less than its minimal adjusted p-value (where the
adjustment is made according to the procedure used for testing the
selected families), it is guaranteed that in each selected family at
least one rejection is made.

In this paper we mainly addressed the goal of controlling some
error-rate within each family and on the average over the selected
families. Other types of error measures may be relevant as well. The
investigator might wish to control for some error-rate on the pooled
set of discoveries across all the families. This seems to be the
only concern in Efron (2008) and Hu et al. (2010). If the selected
families are considered as scientific findings by themselves, a
situation often encountered in large testing problems, it would be
appropriate to address the erroneous selection of a family, and
control some error-rate of the selection process, as, for example
suggest Heller et al. (2009) in the context of microarray analysis
and Sun and Wei (2011) in the context of time-course experiments.
The investigator may be interested in more than one type of error
measures. For example, one might wish to control for FDR within each
gene set, on the average over the selected gene sets and globally on
the pooled set of discovered genes across all the gene sets.
Therefore, an interesting research direction could be development of
the procedures controlling concurrently several error measures that
are of interest to the investigator.

\section{Appendix}
\subsection{Appendix A}
In Example 1, the formula for $E(\mathcal{C}_\mathcal{S})$  is the
following:
\begin{align}
\left(1-\left(1-\frac{q}{n}\right)^n\right)\frac{1-\left(1-q\right)^{nm}}{1-\left(1-q\right)^n}\label{toprove}
\end{align}
The proof is as follows. In this case
\begin{align*}
E(\mathcal{C}_\mathcal{S})=\sum_{i=1}^m\sum_{k=1}^m\frac{1}{k}\textmd{Pr}(V_i>0,
i\in \mathcal{S}(\textbf{P}), C_k^{(i)})
\end{align*}
Family $i$ is selected if its minimal p-value is less than $q$. Each
selected family is tested using Bonferroni procedure at level $q$.
Since all the null hypotheses are true, there is at least one type I
error in family $i$ if its minimal p-value is less than
$\frac{q}{n}$. Therefore, each family where at least one type I
error is made is selected. Now we obtain
\begin{align*}
E(\mathcal{C}_\mathcal{S})&=\sum_{i=1}^m\sum_{k=1}^m\frac{1}{k}\textmd{Pr}(V_i>0,
 C_k^{(i)})\\&=\sum_{i=1}^m\sum_{k=1}^m\frac{1}{k}\textmd{Pr}(V_i>0)
 \textmd{Pr}(C_k^{(i)})\\&=\sum_{i=1}^m\left(1-\left(1-\frac{q}{n}\right)^n\right)
\sum_{k=1}^m\frac{1}{k}\begin{pmatrix}
 m-1\\
k-1
\end{pmatrix}
\Bigg(1-\Big(1-q\Big)^n\Bigg)^{k-1}\Big(1-q\Big)^{n(m-k)}
\end{align*}
Let us define random variable $Y\sim Bin(m-1, 1-(1-q)^n)$. It is
easy to see that
\begin{align}E(\mathcal{C}_\mathcal{S})=m\left(1-\left(1-\frac{q}{n}\right)^n\right)E\left(\frac{1}{Y+1}\right).\label{fir}\end{align}
Using Lemma 1 in Benjamini et al. (2006) we obtain:
\begin{align}E\left(\frac{1}{Y+1}\right)=\frac{1-(1-q)^{nm}}{m(1-(1-q)^n)}.\label{sec}\end{align}
Substituting (\ref{sec}) in (\ref{fir}) we obtain the formula in
(\ref{toprove}).

\subsection{Appendix B}
\begin{center}\textit{Proof of Theorem \ref{mainthm2}}\end{center}
\begin{proof} For each error criterion
$E(\mathcal{C})$, let $\mathcal{C}_+$ be the support of random
variable $\mathcal{C}$.  As in Benjamini and Yekutieli (2005), we
define the following series of events:
\begin{align}
C_k^{(i)}:=\{\textbf{P}^{(i)}:\,R_{min}(\textbf{P}^{(i)})=k\}\label{cki}
\end{align}
According to the definition of $R_{min}(\textbf{P}^{(i)})$ (see
(\ref{rmin}) in Section 2), for each value of $\textbf{P}^{(i)}$ and
$P_i=\underline{p}$, such that $i\in \mathcal{S}(\textbf{P}^{(i)},
\underline{p})$,
 $R_{min}(\textbf{P}^{(i)})\leq |\mathcal{S}(\textbf{P}^{(i)},
\underline{p})|$. Therefore, 
\begin{align}
E(\mathcal{C}_\mathcal{S})&=E\Big(\frac{\sum_{i\in\mathcal{S}(\textbf{P})}\mathcal{C}_i}{\max(|\mathcal{S}(\textbf{P})|,
1)}\Big)\notag\\&\leq \sum_{i=1}^m\sum_{k=1}^m\frac{1}{k}\sum_{c\in
C_+}c\textmd{Pr}(\mathcal{C}_i=c, i\in \mathcal{S}(\textbf{P}),
C_k^{(i)})\label{rec}
\end{align}
 The expression in (\ref{rec}) is identical to the expression in
(\ref{forappb}) in the proof of Theorem \ref{mainthm}, but the
definition of $C_k^{(i)}$ here is different. In the proof of Theorem
\ref{mainthm} we use only the facts that the event $C_k^{(i)}$ is
defined on the space $P^{(i)}$ and that for each $i=1,\ldots,m$,
$\sum_{k=1}^m\textmd{Pr}(C_k^{(i)})=1$. These facts remain true for
the series of events defined in (\ref{cki}). Therefore, the
arguments used in the proof of Theorem \ref{mainthm} after obtaining
(\ref{forappb}) can be applied here.
\end{proof}
\subsection{Appendix C}
We will now prove that any step-up or step-down procedure defines a
simple selection rule. Let $\alpha_1, \alpha_2,\ldots, \alpha_m$ be
the critical values of the given procedure. Let $H_{0i}$ be a
certain rejected hypothesis , and $P_i$ be its p-value. We need to
show that when
all the p-values excluding $P_i$ are fixed and $P_i$ changes as long
as $H_{0i}$ is rejected, the total number of rejections remains
unchanged.

Assume this is a step-up procedure. Let $p_{(1)}^{(i)}\leq
\ldots\leq\ldots p_{(m-1)}^{(i)}$ be the ordered set of p-values
excluding $P_i$. If this is a step-up procedure , and
$p^{(i)}_{(k-1)}\leq \alpha_k, p^{(i)}_{(k)}>\alpha_{k+1}, \ldots,
p^{(i)}_{(m-1)}>\alpha_{m}$, the number of rejections is $k$ for any
value of $P_i$ which guarantees that $H_{0i}$ is rejected, i.e.
$P_i\leq \alpha_k$.

Now assume that this is a step-down procedure. Let $p_{(1)}\leq
p_{(2)}\leq\ldots\leq p_{(m)}$ be the ordered set of p-values.
Assume the number of rejections is $k$, thereby implying
$p_{(1)}\leq \alpha_1,\ldots, p_{(k)}\leq \alpha_k,
p_{(k+1)}>\alpha_{k+1}$. Since $H_{0i}$ is rejected, $P_i=p_{(j)}$
for some $j\leq k$. Let us fix all the p-values excluding $P_i$ and
change the value of $P_i$ so that $H_{0i}$ is still rejected. Assume
$\widetilde{p}_{(1)}\leq \widetilde{p}_{(2)}\leq\ldots\leq
\widetilde{p}_{(m)}$ is the ordered sequence of p-values after the
value of $P_i$ is changed. Now $P_i=\widetilde{p}_{(j')}$, and since
$H_{0i}$ is rejected, $\widetilde{p}_s\leq \alpha_s$ for each $s\leq
j'$. If $j'=j$, it is obvious that the number of rejections remains
unchanged. We will now deal separately with two cases: $j'<j$ and
$j'>j$.
\\(1) Assume $j'<j$. Then $j'\leq k$, therefore it remains
to show that $\widetilde{p}_{(s)}\leq \alpha_s$ for $s=j'+1,\ldots,
k$ and $\widetilde{p}_{(k+1)}>\alpha_{k+1}$. Note that
$\widetilde{p}_{(s)}=p_{(s-1)}\leq\alpha_{s-1}\leq\alpha_{s}$ for
$s=j'+1,\ldots,j$. For $s>j$, $\widetilde{p}_{(s)}=p_{(s)}$,
therefore now it is obvious that the number of rejections remains
unchanged.
\\(2) Assume $j'>j$. We will now show that $j'\leq k$.
Assume $j\leq k<j'$. Note that for each $j\leq s<j'$,
$\widetilde{p}_{(s)}=p_{(s+1)}$. Particularly,
$\widetilde{p}_k=p_{(k+1)}>\alpha_{k+1}>\alpha_k$, contradicting the
rejection of $H_{0i}$. After we have proved that $j'\leq k$, the
result follows immediately, since $\widetilde{p}_{(s)}=p_{(s)}$ for
$s>j'$.

\subsection{Appendix D}
\begin{center}\textit{Proof of Theorem \ref{bhbonf}}\end{center}
The proof uses the techniques developed in Benjamini and Yekutieli
(2001), (2005). The proof in case (2) is much more involved than in
case (1).
\subsubsection{Proof for case (1)}  
For each $i=1,\ldots,m$, let $m_i$ be the number of hypotheses in
family $i$ and $m_{0i}$ be the number of true null hypotheses in
family $i$. Let $H_{0ij}$ and $P_{ij}$, $j=1,\ldots,m_i$ be the
hypotheses and the p-values in family $i$, $i=1,\ldots,m$. We will
use the series of events $C_k^{(i)}$,
\begin{align*}
C_k^{(i)}:=\{\textbf{P}^{(i)}:\,R_{min}(\textbf{P}^{(i)})=k\}
\end{align*}
We will prove the following
\begin{align}
E\left(\frac{\sum_{i=1}^m
V_i}{\max(|\mathcal{S}(\textbf{P})|,1)}\right)&=\sum_{i=1}^m\sum_{k=1}^m\sum_{j=1}^{m_i}\frac{1}{k}\textmd{Pr}\left(i\in
\mathcal{S}(\textbf{P}), C_k^{(i)},
P_{ij}\leq\frac{kq}{mm_i}\right)\notag\\&\leq\sum_{i=1}^m\sum_{k=1}^m\sum_{j=1}^{m_{0i}}\frac{1}{k}\textmd{Pr}\left(
C_k^{(i)},
P_{ij}\leq\frac{kq}{mm_i}\right)\label{sp}\\&=\sum_{i=1}^m\sum_{k=1}^m\sum_{j=1}^{m_i}\frac{1}{k}\textmd{Pr}\left(
C_k^{(i)}|P_{ij}\leq\frac{kq}{mm_i}\right)Pr\left(P_{ij}\leq\frac{kq}{mm_i}\right)\notag\\&\leq\sum_{i=1}^m\sum_{k=1}^m\sum_{j=1}^{m_{0i}}\frac{1}{k}\textmd{Pr}\left(
C_k^{(i)}|P_{ij}\leq\frac{kq}{mm_i}\right)\frac{kq}{mm_i}\label{cont}\\&=\frac{q}{m}\sum_{i=1}^m\frac{1}{m_{i}}\sum_{j=1}^{m_{0i}}\sum_{k=1}^m\textmd{Pr}\left(
C_k^{(i)}|P_{ij}\leq\frac{kq}{mm_i}\right)\label{lastbonf}
\end{align}

Inequality in (\ref{sp}) is obtained by dropping the condition $i\in
\mathcal{S}(\textbf{P})$. Inequality in (\ref{cont}) is true since
the p-values corresponding to true null hypotheses have a uniform
(or stochastically larger) distribution. 
We will now prove that for any $i=1,\ldots,m$, and $j=1,\ldots,m_0$
\begin{align}
\sum_{k=1}^m\textmd{Pr}\left(
C_k^{(i)}|P_{ij}\leq\frac{kq}{mm_i}\right)\leq 1.
\end{align}
Since the selection rule is concordant, the set
$D_k^{(i)}=\cup_{j=1}^kC_j^{(i)}$, which can be written as
$\{P^{(i)}:R_{min}(P^{(i)})<k+1\}$, is an increasing set. The PRDS
property on the subset of the p-values corresponding to the true
null hypotheses implies that for any $i=1,\ldots,m$,
$j=1\ldots,m_{0i}$:
\begin{align*}
\textmd{Pr}\left(D_k^{(i)}|P_{ij}\leq \alpha\right)\leq
\textmd{Pr}\left(D_k^{(i)}|P_{ij}\leq \alpha'\right)
\end{align*}
for any $\alpha\leq \alpha'$. Now, we obtain for any
$k=1,\ldots,m-1$:
\begin{align}
&\textmd{Pr}\left(D_k^{(i)}|P_{ij}\leq\frac{kq}{mm_i}\right)+\textmd{Pr}\left(C_{k+1}^{(i)}|P_{ij}\leq\frac{(k+1)q}{mm_i}\right)\notag\\&\leq
\textmd{Pr}\left(D_k^{(i)}|P_{ij}\leq\frac{(k+1)q}{mm_i}\right)+\textmd{Pr}\left(C_{k+1}^{(i)}|P_{ij}\leq\frac{(k+1)q}{mm_i}\right)\notag\\&=
\textmd{Pr}\left(D_{k+1}^{(i)}|P_{ij}\leq\frac{(k+1)q}{mm_i}\right)\label{bonfprds}.
\end{align}
Applying repeatedly inequality (\ref{bonfprds}) for
$k=1,\ldots,m-1$, and using the fact that $C_1^{(i)}=D_1^{(i)}$ we obtain
\begin{align}
\sum_{k=1}^m\textmd{Pr}\left(C_k^{(i)}\,|\,P_{ij}\leq\frac{kq}{mm_i}\right)&\leq
\textmd{Pr}\left(D_m^{(i)}|P_{ij}\leq\frac{mq}{mm_i}\right)\leq
1\label{mainbnf}
\end{align}
Using (\ref{lastbonf}) and (\ref{mainbnf}) we obtain
\begin{align*}
E\left(\frac{\sum_{i=1}^m
V_i}{\max(|\mathcal{S}(\textbf{P})|,1)}\right)&\leq\frac{q}{m}\sum_{i=1}^m\left(\frac{m_{0i}}{m_{i}}\right)\leq
q.
\end{align*}
%
%
\subsubsection{Proof for case (2)} Let $P_i$ be the set of
p-values corresponding to family $i$. For each $j=1,\ldots, m_{0i}$,
let $P_i^{(ij)}$ denote the set of the remaining $m_i-1$ p-values
after dropping $P_{ij}$. Let us define the following series of
events on the range of $P_i^{(ij)}$. For family $i$, let
$B_{r_i}^{(ij)}[k]$ denote the event in which if $H_{0ij}$ is
rejected by the BH procedure at level $\frac{kq}{m}$, $r_i$
hypotheses (including $H_{0ij}$) are rejected alongside with it. We
will use again the series of events $C_k^{(i)}$, defined in
(\ref{cki}) in Appendix B. Using the fact that the p-values
corresponding to the true null hypotheses have a uniform (or
stochastically larger) distribution, we obtain
\begin{align*}
E\left(\frac{\sum_{i\in \mathcal{S}}
Q_i}{\max(|\mathcal{S}(\textbf{P})|,1)}\right)&=\sum_{i=1}^m\sum_{k=1}^m\frac{1}{k}\sum_{r_{i}=1}^{m_i}\frac{1}{r_i}\sum_{j=1}^{m_{0i}}\textmd{Pr}\left(i\in
\mathcal{S}(\textbf{P}), C_k^{(i)}, B_{r_i}^{(ij)}[k],
P_{ij}\leq\frac{r_ikq}{mm_i}\right)\\&\leq\sum_{i=1}^m\sum_{k=1}^m\frac{1}{k}\sum_{r_{i}=1}^{m_i}\frac{1}{r_i}\sum_{j=1}^{m_{0i}}\textmd{Pr}\left(
 C_k^{(i)}, P_{ij}\leq\frac{r_ikq}{mm_i},
B_{r_i}^{(ij)}[k]\right)\\&\leq\sum_{i=1}^m\sum_{k=1}^m\frac{1}{k}\sum_{r_{i}=1}^{m_i}\frac{1}{r_i}\sum_{j=1}^{m_{0i}}\frac{r_ikq}{mm_i}\cdot\textmd{Pr}\left(C_k^{(i)},
B_{r_i}^{(ij)}[k]\,|\,P_{ij}\leq\frac{r_ikq}{mm_i}\right)\\&=\frac{q}{m}\sum_{i=1}^m\frac{1}{m_i}\sum_{j=1}^{m_{0i}}\sum_{k=1}^m\sum_{r_{i}=1}^{m_i}\textmd{Pr}\left(C_k^{(i)},
B_{r_i}^{(ij)}[k]\,|\,P_{ij}\leq\frac{r_ikq}{mm_i}\right)
\end{align*}
Now the fact that $m_{0i}\leq m_i$ reduces case (2) to the
inequality
\begin{align}
\sum_{k=1}^m\sum_{r_{i}=1}^{m_i}\textmd{Pr}\left(C_k^{(i)},
B_{r_i}^{(ij)}[k]\,|\,P_{ij}\leq\frac{r_ikq}{mm_i}\right)\leq1\label{leq1}
\end{align}
where $i=1,\ldots,m,$ $j=1,\ldots,m_{0i}$.

For each $i=1,\ldots,m$, $t=1,\ldots m m_i$, let us define the group
$$I_t=\{(a, b):a\in\{1,\ldots,m\}, b\in\{1,\ldots,m_i\},\, ab=t\}.$$
Obviously, $I_t$ is a finite set. Note that
\begin{align*}
\sum_{k=1}^m\sum_{r_{i}=1}^{m_i}\textmd{Pr}\left(C_k^{(i)},
B_{r_i}^{(ij)}[k]\,|\,P_{ij}\leq\frac{r_ikq}{mm_i}\right)=\\\\\sum_{t=1}^{mm_i}\sum_{(k,
r_i)\in I_t}\textmd{Pr}\left(C_k^{(i)},
B_{r_i}^{(ij)}[k]\,|\,P_{ij}\leq\frac{tq}{mm_i}\right)
\end{align*}
Therefore, (\ref{leq1}) can be written in the form
\begin{align}
\sum_{t=1}^{mm_i}\sum_{(k, r_i)\in I_t}\textmd{Pr}\left(C_k^{(i)},
B_{r_i}^{(ij)}[k]\,|\,P_{ij}\leq\frac{tq}{mm_i}\right)\leq
1\label{leq1t}
\end{align}
For each family $i$ and its hypothesis $H_{0ij}$, $i=1,\ldots,m$,
$j=1,\ldots,m_{0i}$, let us define
\begin{align*}
A_s^{(ij)}\triangleq\bigcup_{t=1}^s\bigcup_{(k,r_i)\in
I_t}\left(C_k^{(i)}\bigcap B_{r_i}^{(ij)}[k]\right).
\end{align*}
The key statement to prove (\ref{leq1t}) is the following
proposition.
\begin{prop}\label{propseq}
For any $s=1,\ldots, mm_i-1$, $i=1,\ldots,m$,  $j=1,\ldots,m_{0i}$,
one has the inequality
\begin{align}
\textmd{Pr}\left(A_s^{(ij)}\,|\,P_{ij}\leq\frac{sq}{mm_i}\right)&+\sum_{(k,
r_i)\in I_{s+1}}\textmd{Pr}\left(C_k^{(i)},
B_{r_i}^{(ij)}[k]\,|\,P_{ij}\leq\frac{(s+1)q}{mm_i}\right)\notag\\&\leq
\textmd{Pr}\left(A_{s+1}^{(ij)}\,|\,P_{ij}\leq\frac{(s+1)q}{mm_i}\right)\label{iter}
\end{align}
\end{prop}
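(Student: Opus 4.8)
The plan is to follow the same inductive scheme as in case~(1) (inequality~(\ref{bonfprds})), the extra difficulty being that the increasing event whose conditional probability we track now combines information coming from $\textbf{P}^{(i)}$ (through $C_k^{(i)}$) with information coming from $P_i^{(ij)}$ (through $B_{r_i}^{(ij)}[k]$). First I would record that the building blocks of $A_s^{(ij)}$ are pairwise disjoint: the events $\{C_k^{(i)}\}_k$ partition the $\textbf{P}^{(i)}$-space, since each $\textbf{P}^{(i)}$ has a single value $K:=R_{min}(\textbf{P}^{(i)})$, and for each fixed $k$ the events $\{B_{r_i}^{(ij)}[k]\}_{r_i}$ partition the $P_i^{(ij)}$-space, because by the step-up simplicity proved in Appendix~C the number of hypotheses rejected in family $i$ when $H_{0ij}$ is rejected at level $kq/m$ is determined by $P_i^{(ij)}$ alone; call it $\rho[k]$ (note $\rho[k]\ge 1$). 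On a point of $C_k^{(i)}\cap B_{r_i}^{(ij)}[k]$ the product $kr_i$ equals the single number $K\rho[K]$, so distinct pairs $(k,r_i)$ give disjoint pieces and
\[
A_s^{(ij)}=\{\,K\rho[K]\le s\,\},
\]
while $A_{s+1}^{(ij)}$ is the disjoint union of $A_s^{(ij)}$ with the pieces $C_k^{(i)}\cap B_{r_i}^{(ij)}[k]$, $(k,r_i)\in I_{s+1}$. Hence for every event $E$,
\[
\textmd{Pr}\bigl(A_{s+1}^{(ij)}\mid E\bigr)=\textmd{Pr}\bigl(A_s^{(ij)}\mid E\bigr)+\sum_{(k,r_i)\in I_{s+1}}\textmd{Pr}\bigl(C_k^{(i)}\cap B_{r_i}^{(ij)}[k]\mid E\bigr),
\]
and, taking $E=\{P_{ij}\le (s+1)q/(mm_i)\}$, inequality~(\ref{iter}) reduces to
\[
\textmd{Pr}\bigl(A_s^{(ij)}\mid P_{ij}\le \tfrac{sq}{mm_i}\bigr)\le \textmd{Pr}\bigl(A_s^{(ij)}\mid P_{ij}\le \tfrac{(s+1)q}{mm_i}\bigr).
\]

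To obtain this last inequality I would argue, exactly as in case~(1), from the PRDS property, the only new ingredient being that $A_s^{(ij)}=\{K\rho[K]\le s\}$ is an increasing set in the coordinates of $\textbf{P}$ other than $P_{ij}$. I would verify three monotonicities: (a) $K=R_{min}(\textbf{P}^{(i)})$ is nonincreasing in $\textbf{P}^{(i)}$, because the selection rule is concordant, so $\{\textbf{P}^{(i)}:k\le R_{min}(\textbf{P}^{(i)})\}$ is a decreasing set for every $k$; (b) for each fixed $k$, $\rho[k]$ is nonincreasing in $P_i^{(ij)}$ by monotonicity of the BH step-up procedure in its p-values (raising the remaining p-values of family $i$ can only reduce the number of rejections, while $H_{0ij}$ stays rejected if its own p-value is taken arbitrarily small, so $\rho[k]\ge 1$ throughout); and (c) $k\mapsto k\rho[k]$ is nondecreasing, since enlarging the nominal level $kq/m$ of BH can only increase the number of rejections. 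Combining, if both $\textbf{P}^{(i)}$ and $P_i^{(ij)}$ are increased then $K$ drops to some $K'\le K$ and $\rho$ drops pointwise to some $\rho'\le\rho$, whence $K'\rho'[K']\le K'\rho[K']\le K\rho[K]$, so $\{K\rho[K]\le s\}$ is indeed increasing. Since $(i,j)$ with $j\le m_{0i}$ is a true-null coordinate, the PRDS property gives that $\textmd{Pr}(A_s^{(ij)}\mid P_{ij}=x)$ is nondecreasing in $x$, and the usual averaging argument (as in Benjamini and Yekutieli, 2001, and as applied to $D_k^{(i)}$ in case~(1)) then shows $\textmd{Pr}(A_s^{(ij)}\mid P_{ij}\le\alpha)$ is nondecreasing in $\alpha$; applying this with $\alpha=sq/(mm_i)\le (s+1)q/(mm_i)$ completes the proof of Proposition~\ref{propseq}.

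The step I expect to be the main obstacle is showing that $A_s^{(ij)}$ is an increasing set: one has to identify the right scalar functional, namely $K\rho[K]$, which simultaneously captures membership in $A_s^{(ij)}$ and is monotone, and then line up in the same direction three monotonicities coming from three different sources --- concordance of the selection rule across families, monotonicity of BH in its p-values within the family, and monotonicity of BH in its nominal level (the last being what makes $k\mapsto k\rho[k]$ monotone and what is responsible for the product structure of the groups $I_t$). Once this identification is made, the disjointness bookkeeping and the PRDS/averaging step are routine and parallel Appendix~D, case~(1).
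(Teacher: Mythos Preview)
Your proposal is correct and follows essentially the same strategy as the paper's proof: both reduce (\ref{iter}) to the monotonicity of $\textmd{Pr}(A_s^{(ij)}\mid P_{ij}\le\alpha)$ in $\alpha$ by exploiting the pairwise disjointness of the pieces $C_k^{(i)}\cap B_{r_i}^{(ij)}[k]$, and both obtain this monotonicity from PRDS after verifying that $A_s^{(ij)}$ is an increasing set. The only difference is in the packaging of that last verification---the paper rewrites $A_s^{(ij)}$ as a union of the transparently increasing sets $D_k^{(i)}\cap G_{r_i}^{(ij)}[k]$ (where $D_k^{(i)}=\{R_{min}\le k\}$ and $G_{r_i}^{(ij)}[k]=\{\rho[k]\le r_i\}$), whereas you argue via the scalar functional $K\rho[K]$; your formulation has the merit of making explicit the monotonicity of $k\mapsto k\rho[k]$, which the paper's ``easy to see'' identity uses implicitly.
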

Let us show that this proposition implies (\ref{leq1t}). Note that
$$\textmd{\textmd{Pr}}\left(A_1^{(ij)}|P_{ij}\leq\frac{q}{mm_i}\right)=\sum_{(k,
r_i)\in I_{1}}\textmd{Pr}\left(C_k^{(i)},
B_{r_i}^{(ij)}[k]\,|\,P_{ij}\leq\frac{q}{mm_i}\right).$$ Now, a
consequent application of (\ref{iter}) with $s=1,\ldots,mm_i-1$
leads to the inequality
\begin{align*}
\sum_{t=1}^{mm_i}\sum_{(k, r_i)\in I_t}\textmd{Pr}\left(C_k^{(i)},
B_{r_i}^{(ij)}[k]\,|\,P_{ij}\leq\frac{tq}{mm_i}\right)\leq\textmd{Pr}
\left(A_{mm_i}^{(ij)}\,|\,P_{ij}\leq q\right)\leq 1,
\end{align*}
which implies (\ref{leq1t}).

\emph{Proof of Proposition \ref{propseq}.} Let us show that
$A_s^{(ij)}$ is an increasing set. Let
$$D_k^{(i)}\triangleq\{P^{(i)}:R_{min}(P^{(i)})<k+1\}$$ and
$$G_{r_i}^{(ij)}[k]\triangleq\{P_i^{(ij)}:
P_{(r_i)}^{(ij)}>\frac{(r_i+1)kq}{mm_i},
P_{(r_i+1)}^{(ij)}>\frac{(r_i+2)kq}{mm_i},\ldots,P_{(m_i-1)}^{(ij)}>\frac{kq}{m}\},$$
where $\{P_{i(1)}^{(ij)}\leq P_{i(2)}^{(ij)}\leq \ldots\leq
P_{i(m-1)}^{(ij)}\}$ is the ordered set of p-values in the range of
$P_i^{(ij)}$. It is easy to see that
\begin{align*}
A_s^{(ij)}=\bigcup_{t=1}^s\bigcup_{(k,r_i)\in
I_t}\left(D_k^{(i)}\bigcap G_{r_i}^{(ij)}[k]\right)
\end{align*}
Obviously, both $D_k^{(i)}$ and $G_{r_i}^{(ij)}[k]$ are increasing
sets. Unions and intersections of increasing sets are also an
increasing set, hence $A_s^{(ij)}$ is an increasing set. The PRDS
property on the subset of p-values corresponding to the true null
hypotheses implies that for each $i=1\ldots,m$, $j=1,\ldots,m_{0i},
$ and any $\alpha\leq\alpha'$
\begin{align}\label{prds}
\textmd{Pr}\left(A_s^{(ij)}|P_{ij}\leq \alpha\right)\leq
\textmd{Pr}\left(A_s^{(ij)}|P_{ij}\leq \alpha'\right)
\end{align}
It is obvious that for $(k,r)\neq(k', r')$, the sets
$\left(C_k^{(i)}\bigcap B_{r_i}^{(ij)}[k]\right)$ and
$\left(C_{k'}^{(i)}\bigcap B_{{r'}_i}^{(ij)}[k']\right)$ are
disjoint. Therefore for $t\neq t'$, $\bigcup_{(k,r_i)\in I_t}
\left(C_k^{(i)}\bigcap B_{r_i}^{(ij)}[k]\right)$ and
$\bigcup_{(k,r_i)\in I_{t'}}\left(C_k^{(i)}\bigcap
B_{r_i}^{(ij)}[k]\right)$ are disjoint as well. Now we obtain for
any $\alpha$
\begin{align}\label{comb}
\textmd{Pr}\left(A_s^{(ij)}\,|\,P_{ij}\leq
\alpha\right)=\sum_{t=1}^{s}\sum_{(k, r_i)\in
I_t}\textmd{Pr}\left(C_k^{(i)}, B_{r_i}^{(ij)}[k]\,|\,P_{ij}\leq
\alpha\right)
\end{align}
Using (\ref{prds})  and (\ref{comb}) we obtain that for any
$s=1,\ldots,mm_i-1$:
\begin{align*}
\textmd{Pr}\left(A_s^{(ij)}|P_{ij}\leq\frac{sq}{mm_i}\right)+\sum_{(k,
r_i)\in I_{s+1}}\textmd{Pr}\left(C_k^{(i)},
B_{r_i}^{(ij)}[k]\,|\,P_{ij}\leq\frac{(s+1)q}{mm_i}\right)\leq\notag\\
\textmd{Pr}\left(A_s^{(ij)}|P_{ij}\leq\frac{(s+1)q}{mm_i}\right)+\sum_{(k,
r_i)\in I_{s+1}}\textmd{Pr}\left(C_k^{(i)},
B_{r_i}^{(ij)}[k]\,|\,P_{ij}\leq\frac{(s+1)q}{mm_i}\right)=\notag\\
\textmd{Pr}\left(A_{s+1}^{(ij)}|P_{ij}\leq\frac{(s+1)q}{mm_i}\right)
\end{align*}
and Proposition \ref{propseq} follows.

\section{References} \textsc{Benjamini, Y. and Heller, R.}
(2007) False discovery rate for spatial data. \textit{Journal of
American Statistical Association,} \textbf{102}, 1272-1281.
\\\textsc{Benjamini,
Y. and Hochberg, Y.} (1995) Controlling the false discovery rate - A
practical and powerful approach to multiple testing. \textit{Journal
of the Royal Statistical Society}, \textit{Series B (Metallurgy)},
\textbf{57}, 289-300.
\\\textsc{Benjamini, Y. and Hochberg, Y.} (2000)  On the adaptive control of the false discovery fate in multiple testing with independent
statistics. \textit{Journal of educational and behavioral
statistics,} \textbf{25}(1), 60-83.
\\\textsc{Benjamini, Y., Krieger, A. M., and Yekutieli, D.} (2006) Adaptive
linear step-up false discovery rate controlling procedures.
\textit{Biometrika}, \textbf{93} (3):491-507.
\\\textsc{Benjamini, Y. and Yekutieli, D.} (2001) The
control of the false discovery rate in multiple testing under
dependency. \textit{Annals of Statistics,} \textbf{29}, 1165-1188.
\\\textsc{Benjamini, Y. and Yekutieli, D.} (2005) False discovery
rate-adjusted multiple confidence intervals for selected parameters.
\textit{Journal of the American Statistical Association,}
\textbf{100}, 71-93.
\\\textsc{Blanchard, G. and Roquain, E.} (2009) Adaptive False Discovery Rate
Control under Independence and Dependence. \textit{Journal of
machine learning research}, \textbf{10}, 2837-2871.
\\\textsc{Efron, B.} (2008) Simultaneous inference: when should
hypotheses testing problems be combined? \textit{Annals of Applied
Statistics}, \textbf{2}: 197-223.
\\\textsc{Efron, B. and Tibshirani, R.} (2002) Empirical Bayes
methods and false discovery rates for microarrays. \textit{Genetics
Epidemiology,} \textbf{23}, 70–86.
\\\textsc{Farcomeni, A.} (2008) A review of modern multiple hypothesis testing, with
particular attention to the false discovery proportion.
\textit{Statistical Methods in Medical Research,} \textbf{17},
347-388.
\\\textsc{Genovese, C.R. and Wasserman L.}
(2006) Exceedance control of the false discovery proportion.
\textit{Journal of the American Statistical Association,}
\textbf{101}, 1408-1417.
\\\textsc{Heller, R., Manduchi, E., Grant G.R. and Ewens, W.J.} (2009). A flexible
two-stage procedure for identifying gene sets that are
differentially expressed. \textit{Bioinformatics,} \textbf{25},
1019-1205.
\\\textsc{Hu J.X., Zhao, H.Y. and Zhou, H.H.} (2010) False Discovery rate
control with groups. \textit{Journal of the American Statistical
Association}, \textbf{105}, 1215-1227.
\\\textsc{Lehmann, E.L. and Romano J.P.}
(2005) Generalizations of the familywise error rate. \textit{Annals
of Statistics,} \textbf{33}, 1138-1154.
\\\textsc{Loughin, T.} (2004) A systematic comparison of methods for
combining p-values from independent tests. \textit{Computational
statistics and Data Analysis,} \textbf{47}, 467-485.
\\\textsc{Pacifico, M. P.,
Genovese, C., Verdinelli, I. and Wasserman, L.} (2004) False
discovery control for random fields. \textit{J. Multivariate Anal.,}
\textbf{98}, 1441-1469.
\\\textsc{Sarkar, S.K.} (2007) Step-up procedures controlling
generalized FWER and generalized FDR. \textit{Annals of Statistics},
\textbf{35}(6), 2405-2420.
\\\textsc{Stein, J.L., Hua, X., Lee S., Ho A.J., Leow A.D., Toga, A.W., Saykin, A.J., Shen L.,
Foroud T., Pankratz N., Huentelman M.J., Craig, D.W.,Gerber, J.D.,
Allen, A.N., Corneveaux, J.J., DeChairo B.M., Potkin, S.G.  Weiner
M.W. and Thompson, P.M.} (2010) Voxelwise genome-wide association
study (vGWAS). \textit{NeuroImage}, \textbf{53}, 1160-1174.
\\\textsc{Storey, J. D.} (2003) The positive false
discovery rate: A Bayesian interpretation and the q-value.
\textit{Annals of Statistics}, \textbf{31}, 2013-2035.
\\\textsc{Storey, J., Taylor, J., and Siegmund, D.} (2004) Strong control,
conservative point estimation, and simultaneous conservative
consistency of false discovery rates: A unified approach.
\textit{Journal of the Royal Statistical Society, Series B},
\textbf{66} : 187-205.
\\\textsc{Subramanian, A., Tamayo, P.,
Mootha, V., Mukherjee, S., Ebert, B., Gillette, M., Paulovich, A.,
Pomeroy, S., Golub, T., Lander, E., and Mesirov, J.} (2005) Gene set
enrichment analysis: A knowledge-based approach for interpreting
genome-wide expression profiles. \textit{Proceedings of the national
academy of sciences of the USA}, \textbf{102}(43) : 15545-15550.
\\\textsc{Sun, W. and Wei, Z.} (2011) Multiple testing for pattern
identification, with application to microarray time-course
experiments. \textit{Submitted for publication.}
\\\textsc{van der Laan, M.J., Dudoit, S. and
Pollard, K.S.}, (2004) Augmentation procedures for control of the
generalized family-wise error rate and tail probabilities for the
proportion of false positives. \textit{Statistical Applications in
Genetics and Molecular Biology}, \textbf{3}(1).
\\\textsc{Williams, V.S.L., Jones, L.V. and Tukey J.W.} (1999) Controlling error
in multiple comparisons, with examples from state-to-state
differences in educational achievement.  \textit{Journal of
Educational and Behavioral Statistics,} \textbf{24}, 42-69.
\end{document}